\documentclass[12pt]{amsart} 
\usepackage{amsmath, amsthm, amssymb, hyperref, color}
\usepackage[shortlabels]{enumitem}
\usepackage{graphicx}
\usepackage[all]{xypic}
\usepackage{makecell}
\evensidemargin 0in
\oddsidemargin \evensidemargin
\textwidth 6.5in
\newtheorem{theorem}{Theorem}

\newtheorem{lemma}[theorem]{Lemma}
\newtheorem{corollary}[theorem]{Corollary}

\newtheorem{remark}[theorem]{Remark}
\newtheorem{example}[theorem]{Example}
\newtheorem{conjecture}[theorem]{Conjecture}
\DeclareMathOperator{\Ima}{Im}

\newcommand{\RR}{\mathbb{R}}


\title{\textbf{Maximum Number of Modes of Gaussian Mixtures}}
\author[C.~Am\'endola]{Carlos Am\'endola}
\address{Carlos Am\'endola \\ Technische Universit\"at M\"unchen \\ Germany}
\email{carlos.amendola@tum.de}
\author[A.~Engstr\"om]{Alexander Engstr\"om}
\address{Alexander Engstr\"om \\ Aalto University, Helsinki \\ Finland}
\email{alexander.engstrom@aalto.fi}
\author[C.~Haase]{Christian Haase}
\address{Christian Haase \\ Freie Universit\"at Berlin \\ Germany}
\email{haase@math.fu-berlin.de}

\begin{document}
\maketitle
\begin{abstract}
\noindent Gaussian mixture models are widely used in Statistics. A
fundamental aspect of these distributions is the study of the local
maxima of the density, or modes. In particular, it is not known how many modes a mixture of $k$ Gaussians in $d$ dimensions can have. We give a brief account of this problem's
history. Then, we give improved lower bounds and the first upper bound on the maximum
number of modes, provided it is finite.

\end{abstract}
\section{Introduction}
The $d$-dimensional Gaussian distribution $N(\mu, \Sigma)$ can be
defined by its probability density function
\begin{equation}
\label{eq:gaussian}
\phi(x)=\frac{1}{\sqrt{ \det(2 \pi
    \Sigma)}}\mathrm{e}^{-\frac{1}{2}(x-\mu)^T \Sigma^{-1} (x-\mu)}.
\end{equation}
where $\mu \in \mathbb{R}^d$ is the mean vector and the symmetric
positive definite $d \times d$ matrix $\Sigma$ is the covariance
matrix. This density has a unique absolute maximum at $x=\mu$.
Now consider a mixture distribution $X$ consisting of $k$ Gaussian
components $X_i \sim N(\mu_i,\Sigma_i)$ and mixture weights $\alpha_i$
for $i=1,\ldots, k$, so that:
\begin{equation}
\Phi_X(x)= \sum_{i=1}^k \alpha_i \phi_{X_i}(x).
\end{equation}
This is again a probability density function given that $\alpha_i \geq
0$ and $\alpha_1 + \alpha_2 + \cdots + \alpha_k = 1$. In other words,
the density of a Gaussian mixture is a convex combination of Gaussian
densities. Such mixtures can exhibit quite complex behavior even for a
small number $k$ of components. This is a feature that makes them
attractive for modeling in applications. 

A fundamental property of a probability density function is the number of modes, i.e.\ local maxima, that it possesses. For Gaussian mixtures, this is especially relevant in applications such as clustering \cite[p. 383]{Cl}. For example, the \textit{mean shift algorithm} converges if there are only finitely many critical points \cite{W}.

We will be interested in the maximal number $m(d,k)$ of local maxima
for $d$-dimensional Gaussian mixtures with $k$ components. Shockingly,
it is not known whether this maximal number is always finite for
general Gaussian mixtures. On the other hand, we stress that the number of modes is a
property of a Gaussian mixture density with fixed parameters and no
sample involved; it should not be confused with the number of local
maxima of the likelihood function of a Gaussian mixture model (a
relevant but different question, see \cite{ADS} and \cite{JOR}).

\begin{figure}[t]
  \centering
 \includegraphics[scale=0.49]{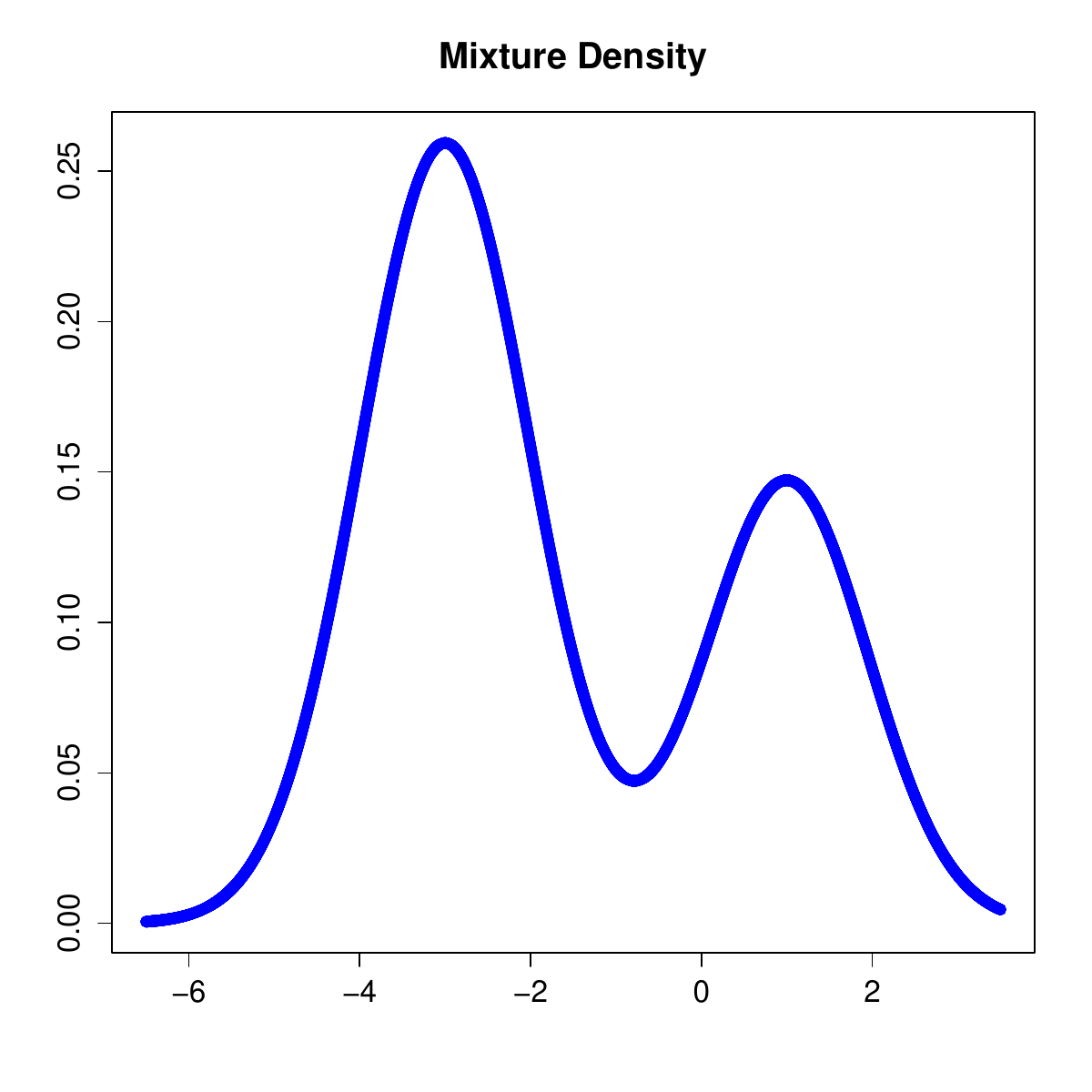}
\vspace{-0.1in}
  \caption{Mixture of two univariate Gaussians with 2 modes}
  \label{fig:mixtuniv}
\end{figure}

\begin{remark}
Since a single Gaussian has a unique global maximum at its mean $\mu$, we have that $m(d,1)=1$ for all $d \geq 1$.
\end{remark}

Our goal is to look for lower and upper bounds for $m(d,k)$. Our main results in this direction are Theorem \ref{thm:many} for the former and Corollary \ref{cor:upp} for the latter.

\section{Background}
\label{sec:two}
As stated in the introduction, a single Gaussian has a unique mode,
that is, $m(d,1)=1$ for all $d \geq 1$. The simplest case when there
is an actual mixture has $d=1$ and $k=2$: a mixture of two univariate
Gaussians $X_1 \sim N(\mu_1, \sigma_1^2)$ and $X_2 \sim
N(\mu_2,\sigma_2^2)$, with mixture parameter $\alpha \in (0,1)$. It
was observed historically that in this scenario the number of modes
was either $1$ or $2$, with the following heuristics:
\begin{itemize}
\item If the distance between the component means is small, then the
  mixture is unimodal (independently of $\alpha$).
\item If the distance between the component means is large enough,
  then there is bimodality unless $\alpha$ is close to $0$ or $1$.
\end{itemize}
A.C. Cohen (1953) and Eisenberger (1964) obtained some first explicit
conditions in these directions \cite{Beh}. Notably, if $\alpha =
\frac{1}{2}$ and $\sigma_1 = \sigma_2 = \sigma$, then the mixture is
unimodal (with mode at $\frac{\mu_1 + \mu_2}{2}$) if and only if
$\vert \mu_2 - \mu_1 \vert \leq 2 \sigma$. A few years later,
J. Behboodian gave a proof that indeed $m(1,2)=2$ by showing the
number of critical points of the density is at most three, and finds that $$\vert \mu_2
- \mu_1 \vert \leq 2 \min( \sigma_1, \sigma_2)$$ is a sufficient
condition for unimodality. Furthermore, if $\sigma_1 = \sigma_2 =
\sigma$, then $$\vert \mu_2 - \mu_1 \vert \leq 2 \sigma \sqrt{1+
  \frac{\vert \log(\alpha) - \log(1-\alpha) \vert }{2}} $$ is again a
sufficient condition for having only one mode in the mixture.
Starting the 21st century, it was Carreira-Perpi\~n\'an and Williams
who had particular interest in the problem \cite{CPW1}. Using
scale-space theory, they prove that $m(1,k)=k$; any univariate
Gaussian mixture with $k$ components has at most $k$ modes.
A natural conjecture could be that $m(d,k)=k$ for all $d,k$, that is, a mixture with $k$ Gaussian components can have at most $k$ modes. \footnote{In June 2016, a discussion thread on the {\tt ANZstat}
  mailing list (e-mail bulletin board for statistics in Australia and
  New Zealand) with the title ``\textit{an interesting
    counter-intuitive fact}" referred to the fact that a Gaussian
  mixture can have more modes than components.}
However, this fails already when $d=k=2$, since a mixture of two
bivariate Gaussians can have three distinct modes (and actually,
$m(2,2)=3$).
\begin{example}
Consider $X_1 \sim N \left( \begin{pmatrix} 1 \\
    0 \end{pmatrix}, \begin{pmatrix} 1 & 0 \\ 0 & 0.1 \end{pmatrix}
\right) $ and $X_2 \sim N \left( \begin{pmatrix} 0 \\
    1 \end{pmatrix}, \begin{pmatrix} 0.1 & 0 \\ 0 & 1 \end{pmatrix}
\right) $, with $\alpha=\frac{1}{2}$. There are two modes close to the
original means at $(1,0)$ and $(0,1)$ and there is also a third mode
near the origin.  This situation is illustrated in the contour plot of Figure
\ref{fig:mixtbiv}, with the two means marked `\emph{+}' and the three
modes marked in red.
\end{example}
\begin{figure}[t]
  \centering
 \includegraphics[scale=0.5]{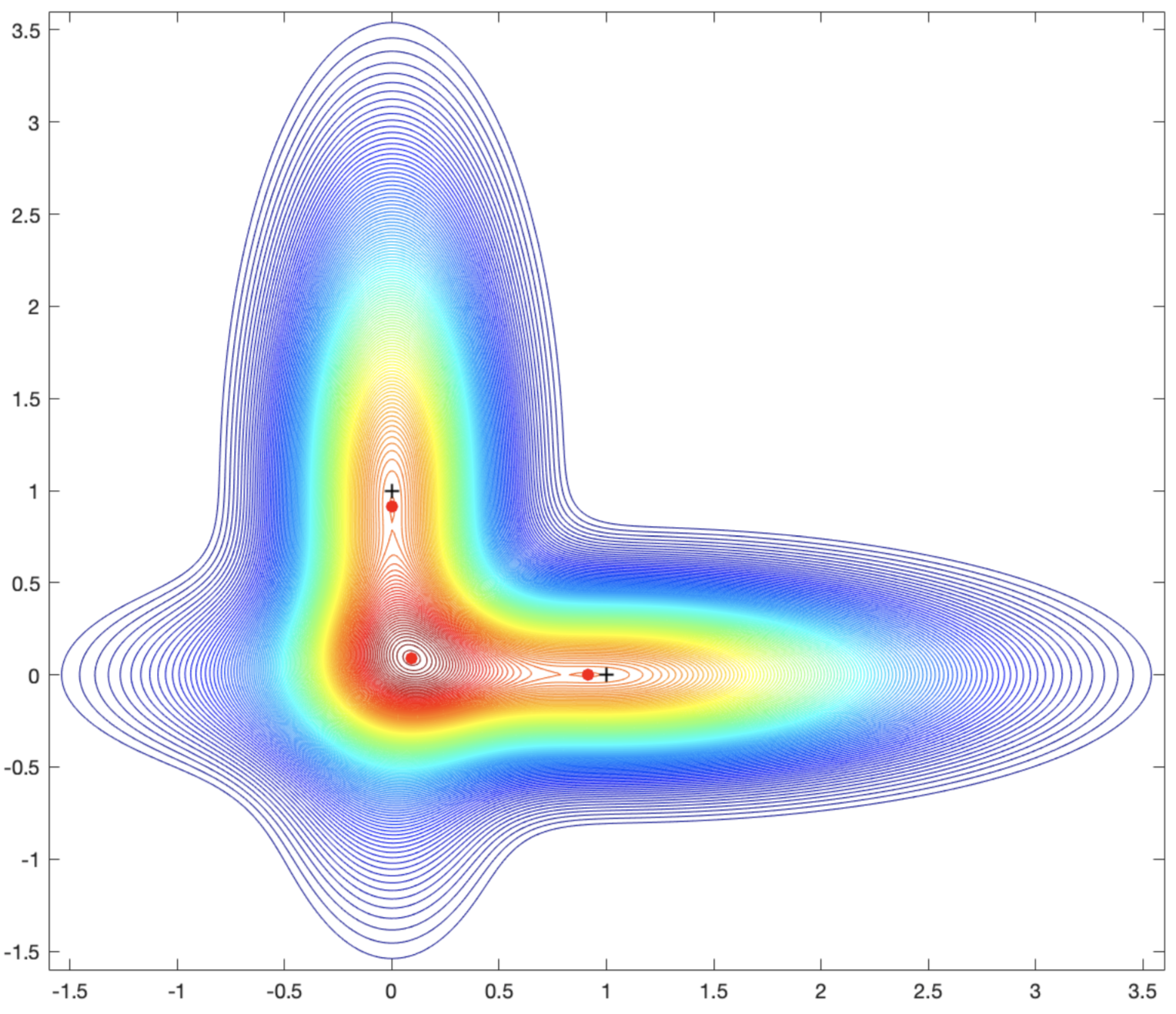}
\vspace{-0.1in}
  \caption{Mixture of two bivariate Gaussians with 3 modes}
  \label{fig:mixtbiv}
\end{figure}
Special attention can be paid to assumptions on the variances. A mixture is said to be \textit{homoscedastic} if all the variances in the components are equal: $\Sigma_i = \Sigma$ for $i=1,\ldots, k$. On the other hand, a mixture is said to be \textit{isotropic} if $\Sigma_i = \sigma_i^2 I$ for $i=1,\ldots, k$, so that covariances are scalar matrices and the densities have a `spherical' shape. Note that, up to coordinate change, homoscedastic mixtures are (homoscedastic) isotropic.   
Carreira-Perpi\~n\'an and Williams conjectured in \cite{CPW1} that if one is restricted to homoscedastic Gaussian mixtures, then the maximum number of modes is actually $k$, and verified this numerically for many examples in a brute force search. Denoting by $h(d,k)$ this maximum number, they asserted that $h(d,k)=k$ for any $d,k \geq 1$.
\begin{remark}
It holds that $h(d,k) \leq m(d,k)$ for all $d,k$ \textup{, and} $h(d,k)$ \textup{is also the maximum number of possible modes of a Gaussian mixture with all unit covariances (by the note above and the fact that the number of modes remains invariant under affine transformations)}
\end{remark}  
However, later J.J. Duistermaat emailed the authors of \cite{CPW2} with a counterexample in dimension $d=2$ with $k=3$ isotropic components, each on the vertex of an equilateral triangle. This configuration gives 4 modes for a small window of parameters, disproving the conjecture. 
\begin{example}
Consider the isotropic mixture with components $X_1 \sim N ((1,0),\sigma^2 I_2)$, $X_2 \sim N ((-\frac{1}{2},\frac{\sqrt{3}}{2}),\sigma^2 I_2)$ and $X_3 \sim N ((-\frac{1}{2},-\frac{\sqrt{3}}{2}),\sigma^2 I_2)$ with $\sigma^2 = 0.53$ and $\alpha=(\frac{1}{3},\frac{1}{3},\frac{1}{3})$. There is a mode on each of the three line segments between the origin and the means, and there is also a fourth mode at the origin. This situation is illustrated in the contour plot of Figure \ref{fig:mixttriv}, with the three means marked `\emph{+}'  and the four modes marked as red points. 
\end{example}
\begin{figure}[t]
  \centering
 \includegraphics[scale=0.5]{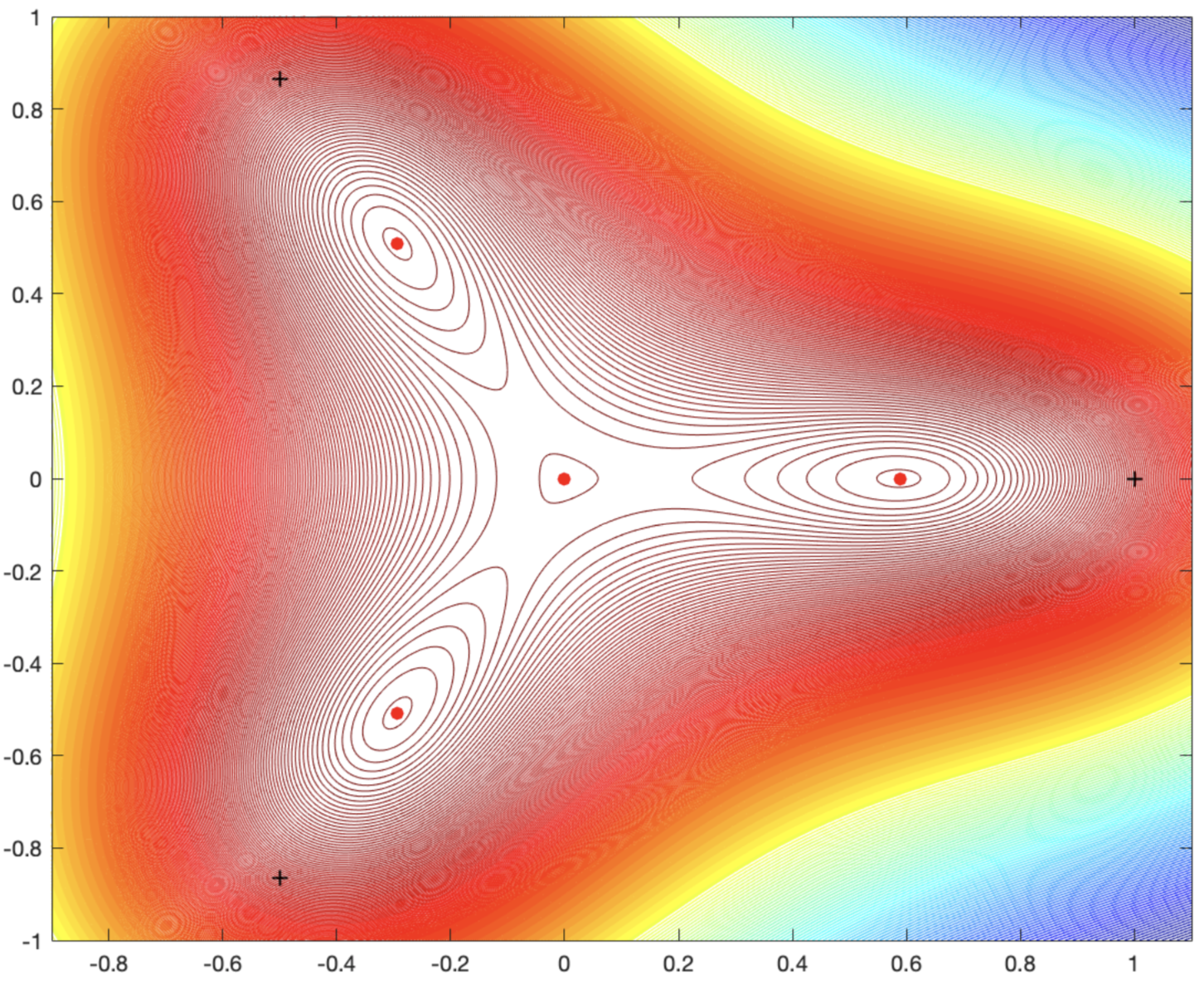}
\vspace{-0.1in}
  \caption{Duistermaat's counterexample: 4 modes in a mixture of 3 bivariate Gaussians}
  \label{fig:mixttriv}
\end{figure}
In terms of contribution to the study of the topography of Gaussian mixture densities, S. Ray and B. Lindsay initiate a systematic study and ask interesting questions in \cite{RL}. They consider the \emph{ridgeline function} $x^*: \Delta_k \rightarrow \RR^d$ given by
\begin{equation}
  x^*(\alpha) = [\alpha_1 \Sigma_1^{-1} + \alpha_2 \Sigma_2^{-1} + \ldots + \alpha_k \Sigma_k^{-1}]^{-1}[\alpha_1 \Sigma_1^{-1} \mu_1 + \alpha_2 \Sigma_2^{-1} \mu_2 + \ldots + \alpha_k \Sigma_k^{-1} \mu_k]
\end{equation}
where $\Delta_k = \{ (\alpha_1, \alpha_2, \dots, \alpha_k) | \, \alpha_i\geq 0 \text{ and } \alpha_1 + \alpha_2 + \dots + \alpha_k = 1 \}$ denotes the $(k-1)$-dimensional probability simplex, obtaining as its image the
\emph{ridgeline variety} $\mathcal{M} = \Ima(x^*)$ that contains all
critical points of $\Phi_X$ for fixed $\mu_1, \ldots, \mu_k$ and fixed
$\Sigma_1, \ldots, \Sigma_k$. This fact is useful, for example, in the
case of homoscedastic mixtures, whose critical points (and in
particular all modes) lie in the convex hull of the component means (a
result that appeared first in \cite{CPW1}). 
It would be interesting to study the locus of critical points of
the Gaussian mixture density function as the means and/or covariances vary.

In the conclusion of \cite{RL}, the following line appears:
``\textit{one might ask if there exists an upper bound for the number
  of modes, one that can be described as a function of $k$ and
  $d$}''.

Assuming this bound is finite, we answer this question in
the affirmative in Section \ref{sec:five}.

\section{Examples and Conjecture}
\label{sec: three}
  
The appearance of a possible extra mode in dimension $d=2$ when having $k=2$ components carries over to higher dimensions. Ray and Ren proved in \cite{RR} that $m(d,2)=d+1$. That is, one can get as many as $d+1$ modes from just a two component Gaussian mixture in dimension $d$.
Looking for further progress, Ray proposed the maximum number of modes problem for the 2011 AIM Workshop on Singular Learning Theory, organized by Steele, Sturmfels and Watanabe \cite{AIM}. The problem was discussed, and it led to the following conjecture: 
   
\begin{conjecture}{(Sturmfels, AIM 2011)} \label{conjlab} For all $d,k \geq 1$, 
\begin{equation} \label{boundAIM}
m(d,k) = \binom{d+k-1}{d}.
\end{equation}
\end{conjecture}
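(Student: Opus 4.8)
\medskip

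\noindent\textbf{A possible route toward Conjecture~\ref{conjlab}.}
The conjecture splits into a lower bound $m(d,k)\ge\binom{d+k-1}{d}$ and an upper bound $m(d,k)\le\binom{d+k-1}{d}$, and the plan is to pursue the two independently.

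For the lower bound, the plan is to build, for every $d$ and $k$, an explicit $k$-component mixture with exactly $\binom{d+k-1}{d}$ modes. It is suggestive that this quantity is the number of monomials of degree at most $k-1$ in $d$ variables, equivalently the number of lattice points of the dilated simplex $(k-1)\cdot\operatorname{conv}(0,e_1,\dots,e_d)$. This points to a configuration of Gaussians that are extreme ``thin slabs'' --- covariances with a single tiny eigenvalue and the rest very large --- stacked along carefully chosen families of parallel affine hyperplanes, so that $f_X$ concentrates on an arrangement of hyperplanes whose ``deep enough'' full-dimensional cells carry one local maximum each. The key steps would be: (i) choose the slab directions and offsets so that the induced arrangement has exactly $\binom{d+k-1}{d}$ such cells, tracking this by induction on $d$; (ii) send the slab-thickness parameters to $0$ and show that in the limit each candidate cell acquires a nondegenerate local maximum; (iii) conclude by openness of nondegenerate maxima. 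I expect (i)--(ii) to be the main obstacle on this side: the most naive slab construction yields only about $(k/d)^{d}$ modes, short of the conjectured count by a factor $e^{d}$, so reaching $\binom{d+k-1}{d}$ exactly seems to require a sharper idea --- several non-parallel slab families, thicknesses varying with scale, or a genuinely tropical limiting configuration whose cells biject with the lattice points above.

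For the upper bound, one would start from the ridgeline. From $\nabla f_X(x)=\sum_i\alpha_i f_{X_i}(x)\,\Sigma_i^{-1}(x-\mu_i)$ one reads off that $\nabla f_X(x)=0\iff x=x^*(w)$ with $w_i=\alpha_i f_{X_i}(x)$, so every critical point lies on $\mathcal M=\Ima(x^*)$, the image of a rational map from the $(k-1)$-dimensional simplex $\Delta_k$. Substituting $x=x^*(w)$ into the $k-1$ independent ratio conditions $w_i/w_j=\alpha_i f_{X_i}(x^*(w))\big/\alpha_j f_{X_j}(x^*(w))$ yields a square system of $k-1$ equations in the $k-1$ free coordinates of $w$, and the goal is to bound the number of its real solutions in $\Delta_k^\circ$ by $\binom{d+k-1}{d}$ --- ideally recognizing that number as a mixed volume, an intersection number, or a count of standard monomials. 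The homoscedastic case is the natural warm-up: after an affine change of variables making $\Sigma$ the identity, $x^*$ becomes the barycentric parametrization of $\operatorname{conv}(\mu_1,\dots,\mu_k)$, each ratio condition reads ``$\log w_i-\log w_j$ equals an affine form in $w$'', and since all $\log f_{X_i}$ share a leading quadratic form only linear terms survive in these differences; one then hopes a fewnomial-type analysis pins the solution count to the lattice-point number.

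The main obstacle, and the reason the conjecture is still open, is twofold. First, finiteness of $m(d,k)$ is itself unknown, so the upper-bound argument must simultaneously prove that the ridgeline system has no positive-dimensional solution component --- a genuine statement about these equations, not a formality, and indeed Corollary~\ref{cor:upp} is exactly the kind of bound one gets \emph{conditional} on finiteness. Second, the Gaussian densities contribute honest exponentials, so B\'ezout and Bernstein bounds do not apply directly, and the transcendental tools that do (Khovanskii's fewnomial theorem, o-minimality) give bounds enormously larger than $\binom{d+k-1}{d}$; extracting the \emph{exact} constant rather than merely the order of growth in $k$ is the crux. I would expect progress only by exploiting the very rigid structure of the exponents --- products of Gaussian densities, quadratic log-densities with a controlled common leading term --- in tandem with the rational parametrization of $\mathcal M$, rather than by treating $\nabla f_X=0$ as a generic system of exponential-polynomial equations.
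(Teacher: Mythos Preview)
The statement is a \emph{conjecture}, and the paper does not prove it; there is no ``paper's own proof'' to compare against. What the paper does supply are partial results on each side: Theorem~\ref{thm:many} gives the lower bound $m(d,k)\ge\binom{k}{d}+k$, which matches $\binom{d+k-1}{d}$ only when $d=2$, and Theorem~\ref{uppertheorem}/Corollary~\ref{cor:upp} gives the (conditional) upper bound $2^{d+\binom{k}{2}}(5+3d)^k$, which is enormously larger than the conjectured value. The paper explicitly flags the full lower bound $\binom{d+k-1}{d}$ as the subject of future work.

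Your proposal is therefore not a proof but a research outline, and you frame it as such. On the lower-bound side, your ``thin slab'' idea is essentially the mechanism the paper already implements in Theorem~\ref{thm:many}: Gaussians with one tiny eigenvalue concentrated on $k$ generic hyperplanes, producing a mode near each of the $\binom{k}{d}$ vertices of the arrangement plus one near each mean. Your diagnosis that this falls short of $\binom{d+k-1}{d}$ and that ``a sharper idea'' is needed is exactly right and matches the paper's own assessment; you have not supplied that sharper idea, only named the gap.

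On the upper-bound side, your ridgeline-based plan is a genuinely different line of attack from what the paper does. The paper makes no attempt to exploit the parametrization $x^*:\Delta_k\to\RR^d$ for counting; instead it writes $\nabla f_X=0$ directly as a system of polynomials in $x$ and the exponentials $y_j=e^{(x-\mu_j)^TQ_j(x-\mu_j)}$ and applies a Khovanskii-type bound (Theorem~\ref{upperthm}). Your hope that working on the $(k-1)$-dimensional simplex with the ratio conditions might yield the exact constant is reasonable as a direction, but you correctly identify the two real obstacles: finiteness is unproved, and fewnomial technology gives bounds of the wrong order. Nothing in your outline overcomes either obstacle, so as it stands this is a description of why the conjecture is hard rather than a route through it.
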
  
This conjecture matches correctly all the known values for $m(d,k)$ so far, which we have presented. In the next section we will show that
for $d=2$ there exist Gaussian mixtures that achieve as many as
$\binom{k+1}{2}$ modes, showing that (\ref{boundAIM}) is a lower bound
on $m(2,k)$.

\begin{figure}[t]
  \centering
 \includegraphics[scale=0.5]{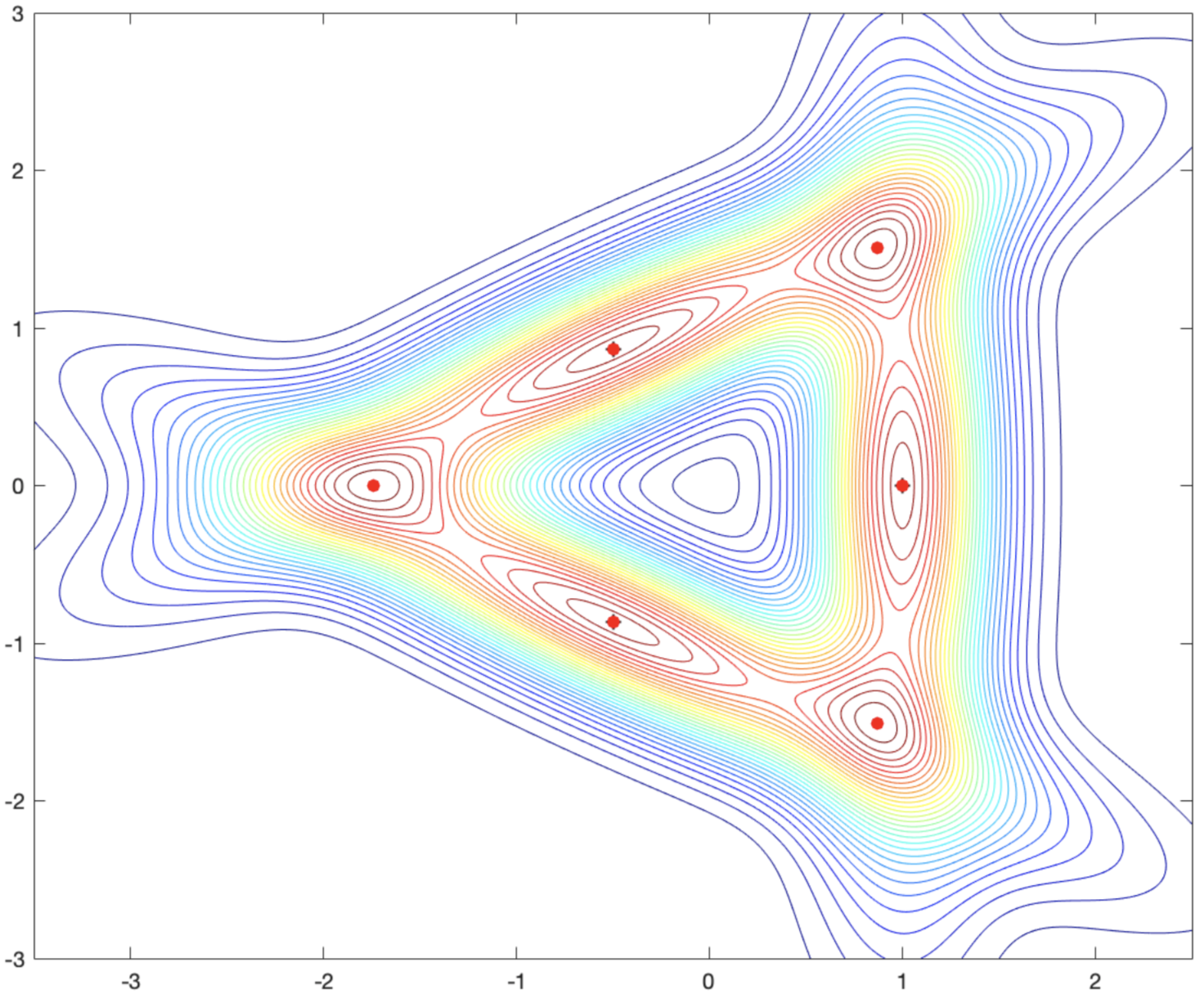}
\vspace{-0.1in}
  \caption{Mixture of 3 bivariate Gaussians with 6 modes}
  \label{fig:sixmodes}
\end{figure}

Let $d=2$ and $k=3$. The conjectured bound gives $\binom{2+3-1}{2}= 6$ modes. In Figure \ref{fig:sixmodes} we give an example of a Gaussian mixture that has this number of modes. The configuration relies on the deformation of 3 lines arranged in an equilateral triangle, and taking means as the middle points on the 3 sides, with all weights $\frac13$. Apart from the modes coming from the means, the other 3 modes lie near the corresponding triangle vertices. 

One could ask if in this anisotropic case, there exists a counterexample to Conjecture~\ref{conjlab} in the spirit of Duistermaat's mixture. Specifically, could an extra mode be formed at the origin for some values of the covariance parameters? This would give a total of 7 modes. Note that by rotational symmetry, the origin is always a critical point. Indeed, if the Gaussians are very concentrated on the lines (like in Figure
\ref{fig:sixmodes}), then the origin is a local minimum. If they diffuse enough, then it will eventually become a mode. The problem is that in this diffusion, the modes coming from the means quickly become saddle points, as illustrated in Figure \ref{fig:threemodes}.  We argue that an intermediate scenario of a total of 7 modes is actually impossible. Indeed, consider any height of the equilateral triangle. Again by symmetry, the corresponding modes near the vertex and middle point of the triangle lie on this height. Restricting the Gaussian mixture density to that line, the components corresponding to the opposite sides project to the same kernel; thus obtaining a combination of two Gaussian kernels. Since we know that the number of modes is at most two in one dimension, not all three of the critical points lying on the line can be modes.

\begin{figure}[t]
  \centering
 \includegraphics[scale=0.5]{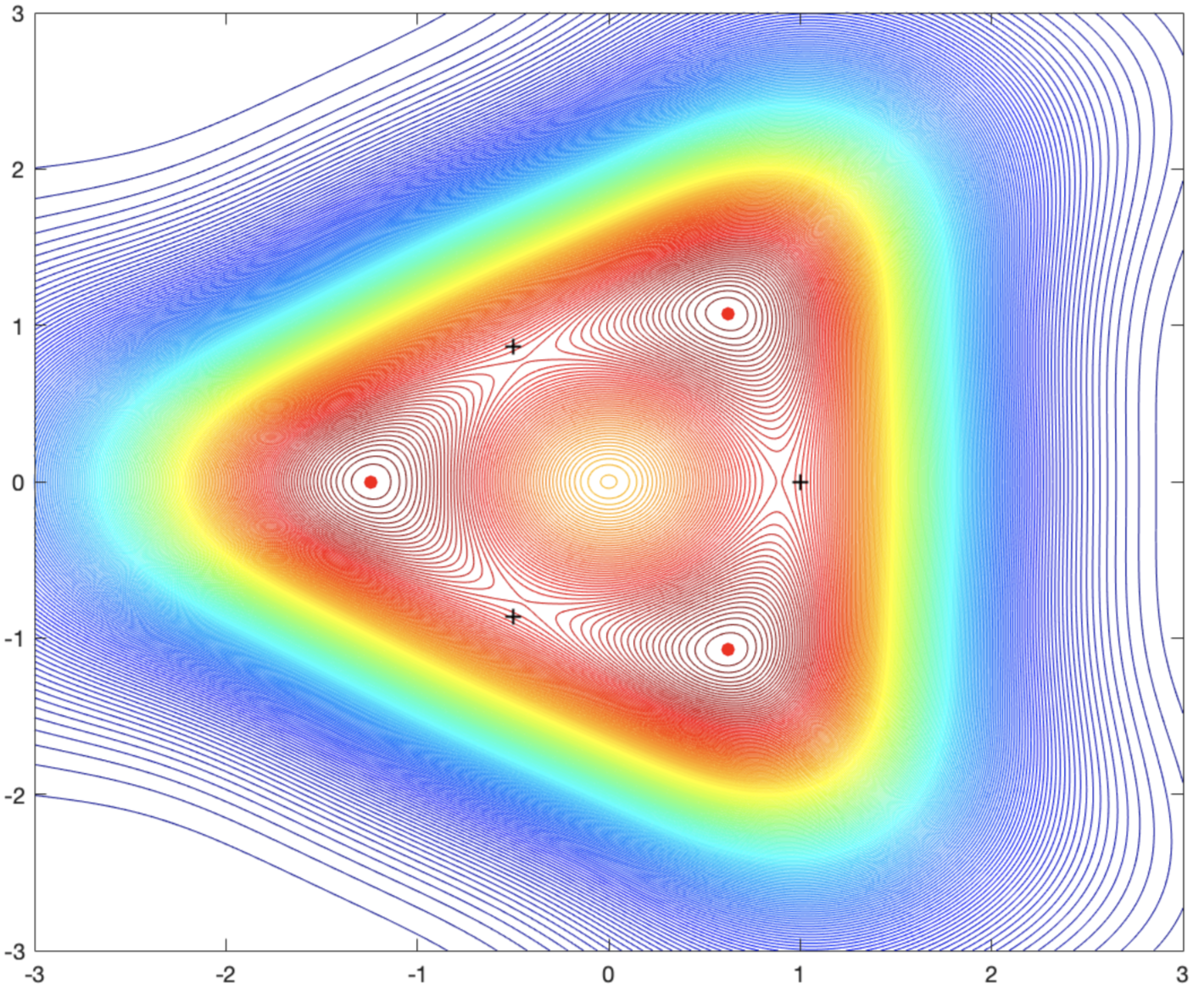}
\vspace{-0.1in}
  \caption{Mixture of 3 bivariate Gaussians with 3 modes}
  \label{fig:threemodes}
\end{figure}

In \cite{EFR}, a construction of an isotropic (and homoscedastic)
mixture of Gaussians is presented. One considers products of
triangles, using Duistermaat's counterexample with $4$ modes of
Section~\ref{sec:two} as the basic building block to obtain
$4^n$ modes in dimension $d=2n$ with $k=3^n$ components. This gives an example where the number of modes is superlinear $k^{1.261}$ in the
number of components (however, note that the dimension $d=2 \log_3 k$
also grows with $k$).

In the following section, we provide configurations for any choice of
$d>1$ and $k>1$ having $\binom{k}{d}+k$ modes. If we let $d$ grow
logarithmically with $k$ as in~\cite{EFR}, we obtain superpolynomially
(but subexponentially) many modes.

\section{Many Modes}
\label{sec:four}
\newcommand{\Fi}{F_i^\pm}
\newcommand{\Fj}{F_j^\pm}
In this section, we prove that Gaussian mixtures can have many modes.  
\begin{theorem} \label{thm:many}
  Given integers $k,d \ge 2$, there is a mixture of $k$ Gaussians in
  $\RR^d$ with at least $\binom{k}{d}+k$ modes. That is, $m(d,k)\geq \binom{k}{d}+k$.
\end{theorem}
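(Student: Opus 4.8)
The plan is to carry the $\binom kd+k$ modes by a configuration of very ``thin'' Gaussians, splitting them into $k$ modes near prescribed centres and $\binom kd$ modes near the points where $d$ of the $k$ hyperplanes cross. Fix hyperplanes $H_1,\dots,H_k\subset\RR^d$ in general position, so that for each $d$-subset $S\in\binom{[k]}{d}$ the hyperplanes $\{H_i:i\in S\}$ meet in a single point $p_S$ (there are exactly $\binom kd$ of these). On each $H_i$ pick a point $\mu_i$; for $d\ge2$ a generic choice makes the $\binom kd+k$ points $\{p_S\}\cup\{\mu_i\}$ pairwise at distance $\ge\delta$ for some fixed $\delta>0$, with each $\mu_i$ at distance $\ge\delta$ from every $H_j$, $j\ne i$. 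Now set $X_i\sim N(\mu_i,\Sigma_i)$ with equal weights $\alpha_i=1/k$, where $\Sigma_i$ has variance $\varepsilon^2$ in the direction normal to $H_i$ and variance $\sigma^2$ in the $d-1$ directions parallel to $H_i$. Writing $f_i=\alpha_i f_{X_i}$, we have $f_X=\sum_{i=1}^k f_i$, and by the symmetry of the construction each $f_i$ attains one and the same maximal value $A>0$, namely at $\mu_i$, with $f_i\le A$ everywhere. The parameters $\varepsilon$ (small) and $\sigma$ (large) will be tuned at the end.

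First I would do the local analysis near each $\mu_i$ on the ball $B(\mu_i,\delta/3)$. On its boundary $f_i$ has dropped: $f_i(\mu_i+v)\le A\,e^{-|v|^2/(2\sigma^2)}=A\,e^{-\delta^2/(18\sigma^2)}$, while every $f_j$ with $j\ne i$ is at most $A\,e^{-(2\delta/3)^2/(2\varepsilon^2)}$ there, since $\mathrm{dist}(\mu_i,H_j)\ge\delta$. As $f_X(\mu_i)\ge f_i(\mu_i)=A$, we get $f_X(\mu_i)-f_X(\mu_i+v)\ge A\bigl[(1-e^{-\delta^2/(18\sigma^2)})-(k-1)e^{-2\delta^2/(9\varepsilon^2)}\bigr]$, which is positive provided $\sigma$ is not too large, namely $\sigma^2\le c_1\,e^{2\delta^2/(9\varepsilon^2)}$ with $c_1=c_1(k,\delta)>0$. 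Then $f_X$ restricted to $\overline{B(\mu_i,\delta/3)}$ attains its maximum at an interior point, which is a mode. This produces $k$ modes.

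Next I would treat the points $p_S$ on the balls $B(p_S,\delta/3)$, where the $d$ components $\{f_i:i\in S\}$ dominate. Choose $\sigma$ large enough that $\eta:=e^{-D^2/(2\sigma^2)}>1-\tfrac1d$, where $D$ is the diameter of the configuration; since $p_S\in H_i$ for $i\in S$ only the parallel variance is felt and $\|p_S-\mu_i\|\le D$, so $f_i(p_S)\ge\eta A$ and hence $f_X(p_S)\ge d\eta A>(d-1)A$. On $\partial B(p_S,\delta/3)$ the $d$ normals $\{n_i:i\in S\}$ form a basis of $\RR^d$, so any $v$ with $|v|=\delta/3$ has $|\langle v,n_i\rangle|\ge c_0\,\delta/3$ for some $i\in S$ and a configuration constant $c_0>0$; that one component drops to at most $A\,e^{-(c_0\delta/3)^2/(2\varepsilon^2)}$, the remaining $d-1$ components in $S$ are $\le A$, and each $f_j$, $j\notin S$, is $\le A\,e^{-(2\delta/3)^2/(2\varepsilon^2)}$ since $\mathrm{dist}(p_S,H_j)\ge\delta$. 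Hence $\max_{\partial B(p_S,\delta/3)}f_X\le(d-1)A+k\,A\,e^{-c_2/\varepsilon^2}$ for a fixed $c_2>0$, which for small $\varepsilon$ is beaten by $f_X(p_S)\ge d\eta A$. So $f_X$ has a mode inside $B(p_S,\delta/3)$, giving $\binom kd$ more. The $\binom kd+k$ balls are pairwise disjoint, so for $d\ge2$ we obtain $\binom kd+k$ distinct modes. (For $d=1$ each $H_i$ is a point, forcing $\mu_i=p_{\{i\}}$; the two families coincide and the construction yields only $k$ modes, in agreement with $m(1,k)=k$.)

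The main obstacle is the simultaneous calibration: the intersection modes require $\sigma$ \emph{large} (at least the constant $D\,(-2\ln(1-\tfrac1d))^{-1/2}$, so that all $d$ relevant Gaussians reach close to full height at $p_S$ and outweigh the $d-1$ survivors on the surrounding sphere), whereas the centre modes require $\sigma$ \emph{not too large} ($\sigma^2\le c_1 e^{2\delta^2/(9\varepsilon^2)}$, so that the super-exponentially small contributions of the off-hyperplane components cannot overcome the gentle $O(\sigma^{-2})$ curvature of $f_i$ at $\mu_i$). One must verify that, fixing the configuration first, then $\sigma$, then $\varepsilon$, this window is nonempty (it is, since the upper cutoff tends to $\infty$ as $\varepsilon\to0$) and that all the inequalities above hold uniformly over the $\binom kd+k$ balls; granting that, the remainder is routine bookkeeping with Gaussian values, together with the linear-algebra fact that $d$ hyperplane normals in general position span $\RR^d$.
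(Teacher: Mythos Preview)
Your proposal is correct and follows the same strategy as the paper: place thin Gaussians along $k$ generic hyperplanes, then certify a mode near each of the $k$ means and near each of the $\binom{k}{d}$ intersection vertices by comparing the value at the centre of a small neighborhood with the maximum on its boundary. The only substantive difference is in the local analysis at an intersection vertex $p_S$. The paper uses \emph{affine cubes} $Q=\{|\eta_i|\le\delta\}$ aligned with the hyperplanes and a \emph{single} parameter $\delta$ (normal variance $\delta^3$, cube half-width $\delta$, parallel variance fixed at $1$): on each facet $F_i^\pm$ exactly the $i$th component is forced to distance $\delta$ from its hyperplane and hence vanishes in the limit, while the cube shrinks so the remaining $d-1$ components converge to their values $\phi_j$ at $p_S$; one then compares $\sum_j\phi_j$ at the centre with $\sum_{j\ne i}\phi_j$ on the facet. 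You use Euclidean balls instead, which loses the ``only one component drops, the others are pinned'' structure; to compensate you introduce a second parameter $\sigma$ and require it large so that $f_X(p_S)\ge d\eta A>(d-1)A$ beats the crude boundary estimate $(d-1)A+o(1)$. Both work; the paper's cube argument avoids the two-parameter calibration you flag as the ``main obstacle.'' Your remark that the construction collapses for $d=1$ (where $\mu_i=p_{\{i\}}$ and one gets only $k$ modes, matching $m(1,k)=k$) is a point the paper's proof does not address.
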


These are the terms $i=1$ and $i=d$ in the expansion $\binom{d+k-1}{d}
= \sum_{i=1}^d \binom{d-1}{d-i} \binom{k}{i}$ of the conjectured bound
\eqref{boundAIM}. For $d=2$, our bound agrees with \eqref{boundAIM}.

\begin{proof}
Starting from a generic arrangement $H_1, \ldots, H_k$ of $k$ affine hyperplanes in $\RR^d$,
we are going to define a family $\Phi=\Phi^\delta$ of Gaussian
mixtures depending on a parameter $\delta > 0$. Around each of the
$\binom{k}{d}$ intersection vertices $p$ of the arrangement, we
construct neighborhoods $Q=Q(p)$, also depending on $\delta$,
so that for $\delta$ small enough, we have $\Phi^\delta|_{\partial Q}
< \Phi^\delta(p)$. This certifies the existence of a mode in $Q$ for each $p$ (for this, we may assume that $k \geq d$ since otherwise there are no vertices).
In addition, there will be a mode near each of the $k$ means.
For each $i = 1, \ldots, k$, denote by $\pi_i \colon \RR^d
\to H_i$ the orthogonal projection, and pick an affine map $\eta_i
\colon \RR^d \to \RR$ such that $|\eta_i(x)|$ is the distance from $x$
to $H_i$. Further, choose means $\mu_i \in H_i$ outside the other
$H_j$. Then, our $i$th component will be a standard Gaussian with
mean $\mu_i$ along $H_i$ with variance $\delta^3$ in the direction
normal to $H_i$:
\begin{equation}
  \label{eq:Phii}
  \Phi_i(x) := \frac{1}{\sqrt{(2\pi)^d}  \sqrt{\delta^3}} \ \exp \left( -
    \frac{1}{2 \, \delta^3} \, |\eta_i(x)|^2 \ - \frac12 \, \|\pi_i(x)-\mu_i\|^2
    \right)
\end{equation}
For the mixture, we take all coefficients to be equal: $\Phi = \frac1k
\sum_i \Phi_i$.
Let $p$ be one of the intersection vertices; without loss of generality, $\left\lbrace p \right\rbrace = H_1 \cap \cdots \cap H_d $. For $\delta > 0$, we define the neighborhood $Q$ of $p$ to be
$$Q(p) = \left\lbrace x \in \RR^d \, \vline \,  | \eta_i(x) | \leq \delta , \, \forall 1 \leq i \leq d \right\rbrace $$
(note that $\eta_i(p) = 0$ for $i=1,\dots,d$).   
This is an affine cube with center $p$.
Now we consider each of its $2d$ facets $\Fi$, $1 \leq i \leq d $, where
$$\Fi = \left\lbrace x \in \RR^d \, \vline \,   \eta_i(x)  = \pm \delta , \,  | \eta_j(x) | \leq \delta \,  \, \, \forall j \neq i  \right\rbrace .$$

We will show that, around the point $p$, as $\delta \to 0$,
\begin{align}
  \sqrt{\delta^3} \ \max_{x \in \Fj} \Phi_i(x)
  & \quad\to\quad
  \phi_i
    \quad \text{for } i,j=1,\ldots,d, \ j \neq i
    \label{eq:phiiFj}
  \\
  \sqrt{\delta^3} \ \max_{x \in \Fi} \Phi_i(x)
  & \quad\to\quad
  0
    \quad \text{for } i=1,\ldots,d
    \label{eq:phiiFi}
  \\
  \sqrt{\delta^3} \ \max_{x \in Q} \Phi_i(x)
  & \quad\to\quad
    0
    \quad \text{for } i=d+1,\ldots,k
    \label{eq:phiiother}
\end{align}
where $\phi_i$ is the positive number
$$ \phi_i := \frac{1}{\sqrt{(2\pi)^d}} \ \exp \left( - \frac12 \,
  \|p-\mu_i\|^2 \right) \,. $$

To establish \eqref{eq:phiiFj}, observe that along $\Fj$ ($j \neq i$),
we have
\begin{equation*}
\sqrt{\delta^3} \ \Phi_i(x) \leq \frac{1}{\sqrt{(2\pi)^d}} \ \exp
\left( - \frac12 \, \|\pi_i(x)-\mu_i\|^2 \right) .
\end{equation*}
with equality at the center of $\Fj$.
The right hand side is a continuous function of $x$, it is
independent of $\delta$, and it evaluates to $\phi_i$ at $p$. As all
of $\Fj$ converges to $p$, we must have \eqref{eq:phiiFj}.

To establish \eqref{eq:phiiFi}, observe that along $\Fi$, we have
$$ \sqrt{ \delta^3} \ \Phi_i(x) \leq \frac{1}{\sqrt{(2\pi)^d}} \ \exp \left( -
  \frac{1}{2 \, \delta^3} \, \delta^2 \ 
\right ) \quad \xrightarrow {\delta \to 0} \quad
0. $$

To establish \eqref{eq:phiiother}, fix $i \in \{d+1, \ldots, k\}$.
Observe that $|\eta_i(p)| > 0$. As the diameter of $Q$ is linear in
$\delta$, for small enough $\delta$, we have $|\eta_i(x)| > \frac12
|\eta_i(p)|$ for all $x \in Q$. Hence, for those $\delta$'s
$$
 \sqrt{ \delta^3} \ \Phi_i(x) \leq \frac{1}{\sqrt{(2\pi)^d}} \ \exp
 \left( - \frac{1}{8 \delta^3} |\eta_i(p)|^2 \right)
 \quad \xrightarrow {\delta \to 0} \quad 0. 
$$

Adding up \eqref{eq:phiiFj}, \eqref{eq:phiiFi} and \eqref{eq:phiiother}, we get for the mixture density $\Phi = \frac1k \sum_{i=1}^k
\Phi_i$ that
\begin{align}
  \sqrt{\delta^3} \ \max_{x \in \partial Q} \Phi(x)
  & \quad\to\quad
  \max_{j=1,\ldots,d} \ \frac1k \sum_{\substack{%
      i=1 \\ i \neq j}}^d \phi_i \label{eq:limit:boundary} \\
  \intertext{On the other hand, since $\Phi_i(p)=\phi_i$ for $i=1,\ldots,d$ and again by \eqref{eq:phiiother},} 
  \sqrt{\delta^3} \ \Phi(p)
  & \quad\to\quad
  \frac1k \sum_{i=1}^d \phi_i \label{eq:limit:at-p}
\end{align}

As the limit~\eqref{eq:limit:boundary} is
smaller (by one $\phi_j>0$) than the limit~\eqref{eq:limit:at-p}, there
must be some $\delta^\star(p) >0$ so that for $0 < \delta <
\delta^\star(p)$ we have $\max_{x \in \partial Q} \Phi(x) <
\Phi(p)$. Then the point $p'$ where the continuous function $\Phi$
takes it maximum over the compact set $Q$ will be in the interior of
$Q$, and hence is a local maximum.
Choosing $\delta^\star$ to be the minimum over the $\delta^\star(p)$
over all intersection vertices $p$, we obtain a mixture with at least
$\binom{k}{d}$ modes. 

The argument for the existence of a mode near $\mu_i$ is similar, but
much simpler. Fix a compact neighborhood $Q$ of $\mu_i$ which avoids
the hyperplanes $H_j$ for $j \neq i$. For small $\delta$, as in \eqref{eq:phiiother}, the $\Phi_j$
for $j \neq i$ become negligible along $Q$. Since the remaining $\Phi_i$ attains its maximum at $\mu_i$, the value of the mixture density $\Phi$ at $\mu_i$ will be larger than its values along $\partial Q$. Thus we obtain the existence of another $k$ modes for sufficiently small $\delta$.
\end{proof}
\section{Not Too Many Modes}
\label{sec:five}
The  main result of this section is to present an upper bound on the number of modes of a Gaussian mixture. We start by looking at the set of critical points and we will use Khovanskii's theory on \textit{fewnomials}, see \cite{KHO}. 
\begin{theorem} \label{uppertheorem}
For all $d,k \geq 1$, the number of non-degenerate critical points for the density of a mixture of $k$ Gaussians in $\RR^d$ is bounded by
\begin{equation} \label{upperbound}
 2^{d+ \binom{k}{2}}  (5+3d)^k.
\end{equation}
\end{theorem}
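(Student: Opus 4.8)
The plan is to describe the critical locus of the mixture density as the solution set of a system of equations of low \emph{Pfaffian complexity} in the sense of Khovanskii, and then to read off the bound \eqref{upperbound} by feeding the complexity parameters of this system into Khovanskii's estimate for the number of non-degenerate solutions of a Pfaffian system (cite \cite{KHO}). No new geometry is needed beyond setting up this dictionary correctly.

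First I would write out the critical equations. Differentiating $f_X=\sum_{i=1}^k\alpha_i f_{X_i}$ and using $\nabla f_{X_i}(x)=-f_{X_i}(x)\,\Sigma_i^{-1}(x-\mu_i)$, a point $x$ is critical if and only if
$$g_m(x):=\sum_{i=1}^k \beta_i\,\bigl(\Sigma_i^{-1}(x-\mu_i)\bigr)_m\,E_i(x)=0\qquad (m=1,\dots,d),$$
where $\beta_i=\alpha_i/\sqrt{\det(2\pi\Sigma_i)}>0$ and $E_i(x)=\exp\bigl(-\tfrac12(x-\mu_i)^T\Sigma_i^{-1}(x-\mu_i)\bigr)$. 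The Jacobian of $(g_1,\dots,g_d)$ at a common zero is precisely minus the Hessian matrix of $f_X$, so a common zero is a non-degenerate solution of this system exactly when it is a non-degenerate critical point of $f_X$ (modes being a subset of these). Hence it suffices to bound the number of non-degenerate solutions of the system $g_1=\dots=g_d=0$ in $\RR^d$.

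Next I would exhibit the Pfaffian structure. Because $\partial E_i/\partial x_j=-(\Sigma_i^{-1}(x-\mu_i))_j\,E_i$, the functions $E_1,\dots,E_k$ form a Pfaffian chain on all of $\RR^d$: the defining relations read $dE_i=E_i\sum_j L_{ij}(x)\,dx_j$ with each $L_{ij}$ affine-linear in $x$, so the chain has order $k$ and degree $2$. Moreover each $g_m$ is a polynomial in the variables $x_1,\dots,x_d,E_1,\dots,E_k$ which is affine-linear in the $x$'s and linear in the $E_i$'s, hence of degree $2$. I would then invoke Khovanskii's theorem on the number of non-degenerate solutions of a system of $d$ such equations in $d$ variables; substituting $n=d$ variables, chain order $k$, chain degree $\alpha=2$ and equation degree $\beta=2$ into his estimate and simplifying should produce exactly $2^{d+\binom k2}(5+3d)^k$, with the factor $2^{\binom k2}$ coming from the $2^{r(r-1)/2}$ term with $r=k$.

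The routine parts are the gradient computation and the verification that $(E_1,\dots,E_k)$ is a Pfaffian chain of the stated order and degree. The part that needs care is aligning everything with the hypotheses of Khovanskii's theorem: one must use the formulation valid on the unbounded domain $\RR^d$ (which applies, since the exponential of a quadratic polynomial is a Pfaffian function defined on all of $\RR^d$), confirm that "non-degenerate solution of the system" genuinely coincides with "non-degenerate critical point of $f_X$" via the Jacobian-equals-minus-Hessian identity, and then carefully track the four complexity parameters through Khovanskii's formula to land on the precise constant in \eqref{upperbound}. I expect this last bookkeeping — selecting the exact form of Khovanskii's bound and substituting — to be the main obstacle, as the conceptual content is simply recognizing the critical set as a Pfaffian variety of small complexity.
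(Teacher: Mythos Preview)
Your proposal is essentially the paper's approach: both recognise the critical equations of $f_X$ as a fewnomial system in the Gaussian kernels $E_1,\dots,E_k$ and then invoke a Khovanskii-type estimate. The only difference is that the paper does not quote a ready-made Pfaffian bound from \cite{KHO}; instead it states and proves a tailored intermediate result (Theorem~\ref{upperthm}) for systems involving exponentials of \emph{quadratic} forms, rerunning the Khovanskii--Rolle induction explicitly, precisely in order to land on the exact constant $2^{d+\binom{k}{2}}(5+3d)^k$. Your plan to cite the general Pfaffian machinery directly is conceptually equivalent, and the ``bookkeeping obstacle'' you flag is exactly why the authors chose to derive the bound in-house rather than match constants against one of the several formulations in \cite{KHO}.
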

This will follow from a Khovanskii-type theorem that bounds the number of nondegenerate solutions to a system of polynomial equations that includes transcendental functions. Such a version where the transcendental functions are exponentials of linear forms was first presented by Khovanskii to illustrate his theory of fewnomials \cite[p.12]{KHO}. In our case, however, we will be interested in exponentials of quadratic forms.
\begin{theorem} \label{upperthm}
For $1\leq i \leq n$, let $F_i \in \RR[x_1,\ldots,x_n,y_1,\ldots,y_k]$ be polynomials of degree $d_i$ and for $1\leq j \leq k$ consider the exponential quadratic forms $y_j(x)=\mathrm{e}^{(x-\mu_j)^T Q_j (x-\mu_j)}$, with $\mu_j \in \RR^n$ and $Q_j \in \RR^{n \times n}$. If $g_i:\RR^n \rightarrow \RR$ are given by $g_i(x)=F_i(x_1,\ldots,x_n,y_1(x),\ldots,y_k(x))$ then the number of non-degenerate solutions to the system $g_1=g_2=\cdots=g_n=0$ is finite and bounded by 
\begin{equation} \label{fewbound}
d_1 \cdots d_n ( 5 + n + d_1 + \ldots + d_n)^k \cdot 2^{\frac{k(k-1)}{2}}. 
\end{equation} 
\end{theorem}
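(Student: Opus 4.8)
The plan is to exhibit the transcendental functions $y_j$ as a \emph{Pfaffian chain} and then run Khovanskii's reduction scheme, keeping track of every degree so that the constants come out exactly as in \eqref{fewbound}. Put $q_j(x) = (x-\mu_j)^T Q_j (x-\mu_j)$; each $\partial q_j/\partial x_\ell$ is an affine-linear polynomial in $x$, so $\partial y_j/\partial x_\ell = (\partial q_j/\partial x_\ell)\,y_j$. Hence $(y_1,\dots,y_k)$ is a Pfaffian chain of order $k$ on all of $\RR^n$ whose structure polynomials $(\partial q_j/\partial x_\ell)\,y_j$ have degree $2$ in the combined variables $(x_1,\dots,x_n,y_1,\dots,y_k)$, and each $g_i = F_i(x,y_1(x),\dots,y_k(x))$ is a Pfaffian function subordinate to this chain with polynomial degree $d_i$. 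This is precisely where the quadratic --- rather than linear --- exponents cost something: for $\mathrm{e}^{\text{linear forms}}$ the structure polynomials have degree $1$, and it is the surviving factor $y_j$ after each differentiation that will inflate the base of the exponential in \eqref{fewbound} up to $5+n+d_1+\dots+d_n$.

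The argument is an induction on the chain order $k$, in the style of the Khovanskii--Rolle method. The base case $k=0$ is an honest polynomial system $g_1=\dots=g_n=0$ in $n$ variables, where B\'ezout gives at most $d_1\cdots d_n$ nondegenerate solutions; since $(5+n+d_1+\dots+d_n)^0 = 2^0 = 1$, this matches. For the inductive step, after a genericity perturbation making all intermediate zero sets smooth of the expected dimension and all solutions nondegenerate, I would introduce a fresh coordinate $x_{n+1}$ in the role of $y_k$. The $g_1,\dots,g_n$ become polynomials in $(x,x_{n+1},y_1,\dots,y_{k-1})$ cutting out a curve $\Gamma\subseteq\RR^{n+1}$, on which $(y_1,\dots,y_{k-1})$ is still a Pfaffian chain, now of order $k-1$. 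Our solutions are exactly the points of $\Gamma$ lying on the integral hypersurface of the $1$-form $\omega = dx_{n+1} - x_{n+1}\sum_\ell(\partial q_k/\partial x_\ell)\,dx_\ell$, and the multidimensional Rolle--Khovanskii lemma bounds their number by the number of points of $\Gamma$ at which $\omega$ annihilates the tangent line, plus a term for the noncompact ends of $\Gamma$. The tangency condition is a single polynomial equation $g_{n+1}=0$ in $(x,x_{n+1},y_1,\dots,y_{k-1})$ --- built from the generalized cross product of the $\nabla g_i$ paired against the coefficients of $\omega$ --- whose degree I would estimate at $d_1+\dots+d_n+n$ up to a small additive constant, and the number of ends of $\Gamma$ is controlled recursively (or by a crude Pfaffian estimate), contributing only such constants. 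One is then left with a system of $n+1$ equations in $n+1$ variables whose chain has order $k-1$, to which the inductive hypothesis applies.

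After $k$ reductions one arrives at a polynomial system in $n+k$ variables, whose B\'ezout bound is $d_1\cdots d_n$ times a product of $k$ new degrees $e_1,\dots,e_k$; tracing the recursion gives $e_j \lesssim 2^{j-1}(5+n+d_1+\dots+d_n)$, the degree bound roughly doubling at each of the $k$ steps, so the product telescopes to $d_1\cdots d_n\,(5+n+d_1+\dots+d_n)^k\,2^{k(k-1)/2}$, with $2^{k(k-1)/2}=2^{0+1+\dots+(k-1)}$ exactly accounting for the doubling. The main obstacle is the bookkeeping in this last step: one must propagate the degree growth and, more delicately, the additive end/boundary contributions through every Khovanskii--Rolle reduction tightly enough to land on the constant $5$ and the clean exponent $k$ rather than a weaker estimate, which in practice means packaging the count of solutions together with the count of noncompact ends into a single recursively bounded quantity. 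A secondary difficulty --- and the place where the genuinely analytic content of Khovanskii's theory enters --- is justifying the genericity perturbations and bounding the number of connected and noncompact components of the intermediate Pfaffian sets over the noncompact domain $\RR^n$.
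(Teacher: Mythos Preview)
Your proposal is correct and follows essentially the same Khovanskii reduction as the paper: the paper's substitution $y_k \mapsto t\,y_k$ and intersection with the hyperplane $t=1$ becomes, after the change of variables $u=t\,y_k$, exactly your replacement of $y_k$ by a fresh coordinate $x_{n+1}$ together with the integral-hypersurface constraint for the $1$-form $\omega$, and the tangency equation you call $g_{n+1}$ is the paper's $v_{n+1}=0$ of degree $n+D$. The only organizational difference is that the paper bounds $N$ (tangencies) and $q$ (ends) separately by invoking the inductive hypothesis at each step rather than unrolling to a single polynomial system in $n+k$ variables, which makes the arithmetic producing the constant $5$ in $(5+n+D)^k$ cleaner---you correctly flag this bookkeeping, and the genericity/Morse--Sard perturbation, as the places where care is needed.
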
  
In order to prove his theorem, Khovanskii gives first a sketch making simplifying assumptions (and skipping technical details) and fills the theory in his next two chapters. This sketch is also presented in \cite{SOT} and \cite{BURG}, and we will present the proof of our theorem in the same way. We will need the following lemma (for a proof see e.g. Theorem 4.3 in \cite{SOT}). 
\begin{lemma}[Khovanskii-Rolle]
Let $C \subset \RR^{n+1}$ be a smooth curve that intersects the hyperplane $H$ given by ${x_{n+1}=0}$ transversally, and $v=(v_1,\ldots,v_{n+1}): C \rightarrow \RR^{n+1}$ a smooth nonvanishing tangential vector field to $C$. 
Then $\vert (C \cap H) \vert < N + q$, where $N$ is the number of points of $C$ where $v_{n+1}=0$ and $q$ is the number of unbounded components of $C$. 
\end{lemma}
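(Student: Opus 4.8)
The plan is to read the Khovanskii--Rolle lemma as a one-variable Rolle's theorem applied on each connected component of the curve $C$. First I would fix the bookkeeping. Since $C$ is a smooth curve --- an embedded $1$-manifold without boundary --- each of its connected components is diffeomorphic either to a circle (the bounded components) or to the real line (the $q$ unbounded ones). The intersection $C\cap H$ is exactly the zero set, on $C$, of the height function $h:=x_{n+1}|_{C}$, and transversality of $C$ with $H$ says precisely that $h$ vanishes to first order at each of these points; in particular they are isolated, so on any compact part of $C$ there are finitely many of them. I would then identify $N$ with the number of critical points of $h$ on $C$: for any local parametrisation $\gamma(t)$ of a piece of $C$ the velocity $\gamma'(t)$ is a nowhere-zero scalar multiple of $v(\gamma(t))$, because $v$ is a nonvanishing tangent field, so $\frac{d}{dt}h(\gamma(t))$ equals that scalar times $v_{n+1}(\gamma(t))$; hence $p\in C$ is a critical point of $h|_{C}$ exactly when $v_{n+1}(p)=0$.

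With these identifications the core of the argument is classical Rolle, applied component by component. On an unbounded (line) component $L$ carrying $\ell$ zeros of $h$, ordered along $L$, Rolle's theorem produces at least one point with $v_{n+1}=0$ strictly between each pair of consecutive zeros, so $L$ contains at least $\ell-1$ of the $N$ points. On a circle component the zeros cut the circle into the same number of arcs, on each of which $h$ has constant sign and vanishes at both ends, hence attains an interior extremum; so a circle carrying $z$ zeros contains at least $z$ of the $N$ points. Summing these contributions over all components and recalling that exactly $q$ of them are unbounded gives $|C\cap H|\le N+q$; the strict inequality stated above then follows by tracking the count slightly more carefully, as in the proof of Theorem~4.3 of \cite{SOT}.

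The step I expect to be the main obstacle is the behaviour of $C$ and $h$ at infinity. Without a properness assumption on $h$, an unbounded component could a priori carry infinitely many zeros, or escape to infinity in a way that invalidates the naive ``between consecutive zeros'' enumeration, so making the Rolle count rigorous requires controlling the ends of the curve --- for instance by passing to a suitable compactification of $C$ or by invoking a finiteness theorem for the class of curves in question. This is exactly the technical content fleshed out in \cite{SOT}; moreover, in our intended application $C$ will arise from a Khovanskii-type system whose real solution set has finite topology, so the needed finiteness may simply be cited rather than re-proved here.
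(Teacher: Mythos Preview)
The paper does not give its own proof of this lemma; it simply cites Theorem~4.3 of \cite{SOT}. Your outline is exactly the standard argument one finds in that reference: classify the components of $C$ as lines or circles, identify zeros of $v_{n+1}$ with critical points of the height function $h=x_{n+1}|_C$, and apply the one-variable Rolle theorem arc by arc. So there is nothing to contrast --- you have supplied the proof the paper chose to omit, along the same route the cited source takes.

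Two remarks on the worries you flag. First, the concern about infinitely many zeros on an unbounded component is not a real obstacle to the \emph{inequality}: if a component carried infinitely many transversal zeros of $h$, Rolle would produce infinitely many interlacing critical points, so $N=\infty$ and the bound is vacuous. The content of the lemma lies in the finite case, where your arc count is already rigorous. Second, your own accounting honestly yields only $|C\cap H|\le N+q$ (a round circle meeting $H$ in two points has exactly two critical points of $h$, and $q=0$), and you are right to defer the sharpening to a strict inequality to \cite{SOT}; that is precisely what the paper does as well.
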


\begin{proof}  
(of Theorem \ref{upperthm}) \quad
By induction on $k$. If $k=0$, there are no exponentials and the bound
\eqref{fewbound} reduces to the product of the degrees $d_1 \cdots
d_n$. This is the well known B\'ezout bound for a multivariate system
of polynomial equations. \\ 
Now we will give the sketch of the proof and mention how our estimates
change if the smooth assumptions in the induction step do not hold. In any
case, the final inequalities needed to prove the bound
\eqref{fewbound} will hold. \\ 
For $k\geq 1$, to reduce the number of exponentials to $k-1$, we
introduce a new variable $t$ such that the system with equations
\begin{equation} \label{systcurve}
\widehat{g_i}(x,t) :=
F_i(x_1,\ldots,x_n,y_1(x),\ldots,y_{k-1}(x),ty_k(x))  
\end{equation}
has the same as the original system when
intersecting with the hyperplane $t=1$. We assume the functions
\eqref{systcurve} have $0$ as a regular value so that the locus
$\widehat{g_1}= \ldots = \widehat{g_n} = 0$ is a smooth curve $C$ in
$\RR^{n+1}$  (this is a critical step that needs to be modified
later), so we can apply the Khovanskii-Rolle Lemma. Indeed, by
Cramer's rule, the vector field
\begin{equation}
v_r := (-1)^{n+1-r} \det \frac{\partial ( \widehat{g_1}, \ldots, 
  \widehat{g_n} )}{\partial (x_1, \ldots, x_{r-1}, 
  x_{r+1}, \ldots, x_n, t) }
\end{equation}
is orthogonal to $\nabla\hat{g}_i$ for all $i$. So it is tangential to
$C$ and non-vanishing because $0$ is a regular value.
Thus, the bound for $N$ is the number of solutions to the system in
the $n+1$ variables $x_1,x_2,\ldots,x_n,u$ with $u=t y_k(x)$ and $k-1$
exponentials
\begin{equation} \label{Nsystem}
\widehat{g_1}=0, \, \ldots, \, \widehat{g_n}=0, \, \, v_{n+1}= \det \frac{\partial
  \hat{g}}{\partial x} = 0.
\end{equation}
Now $\frac{\partial y_j}{\partial x_i}(x)=y_j \cdot l_{ij}(x)$ where
$l_{ij}$ is a linear function. Hence, $\frac{\partial \widehat{g_i}
}{\partial x_j}=h_{ij}(x,y_1(x),\ldots,y_{k-1}(x),u)$ where $h_{ij}$
is a polynomial of degree at most $d_i+1$. Thus $v_{n+1}(x,u)$ is a
polynomial of degree at most $(d_1+1)+\ldots +(d_n+1)= n + D$, where
$D=d_1 + \ldots + d_n$. By induction hypothesis, 
\begin{equation}
N \leq d_1 \cdots d_n (n + D)(5 + (n+1) + (D+ n + D))^{k-1} \cdot 2^{\frac{(k-1)(k-2)}{2}}
\end{equation}
In order to bound $q$, the number of unbounded components of $C$, one
observes that 
a hyperplane sufficiently far from the
origin will meet $C$ in at least $q$ points (cf. \cite[Lemma 12.6]{BURG}). In other words, $q$ can be bounded by the number of
solutions of a system 
\begin{equation}
\widehat{g_1}=0, \, \, \ldots \, \, ,  \widehat{g_n}=0, \, \, \, \lambda_1 x_1 + \ldots +\lambda_n x_n + \lambda_{n+1}u + \mu =0,
\end{equation}
for some $\lambda_i, \mu \in \RR$. Under non-degeneracy of the solutions, we get by induction hypothesis,
\begin{equation}
q \leq d_1 \cdots d_n \cdot 1 \cdot (5+ (n+1) + (D+1))^{k-1} \cdot 2^{\frac{(k-1)(k-2)}{2}}
\end{equation}
So, in total,
\[ 
\begin{array}{rcl}
N   +  q & \leq & d_1 \cdots d_n \left[ (n+D)(6 + 2n + 2D)^{k-1} + (7
  + n + D)^{k-1} \right] \cdot 2^{\frac{(k-1)(k-2)}{2}} \\
& < &  d_1 \cdots d_n \left[ (n+D)(5 + n + D)^{k-1} \cdot  2^{k-1} +
  5(5 + n + D)^{k-1} \cdot 2^{k-1} \right] \cdot
2^{\frac{(k-1)(k-2)}{2}} \\
& = & d_1 \cdots d_n (5+n+D)^k \cdot 2^{\frac{k(k-1)}{2}}.
\end{array}
 \]
as we wanted. This is the end of the sketch.

If the smoothness assumptions for the system after introducing $t$ are
not satisfied, the argument is modified via the Morse-Sard
Theorem. The details of such modifications can be found along
\cite{KHO}, although we find that for our theorem these are better
summarized in \cite[p. 293-295]{BURG}. Essentially, one slightly
perturbs the system from $\hat{g_i}=0$ to $\hat{g_i}=\epsilon_i$
($\epsilon$ in a neighborhood of $0$) to guarantee obtaining a smooth
curve $C$.
The asserted bound \eqref{fewbound} remains unchanged, and the number
of non-degenerate solutions of the perturbed system cannot be less
than the number for the original system. 

Another change is that since the polynomial system might not
define a proper map, one adds an extra variable $x_0$ with an extra
equation
\begin{equation} \label{extraeq}
g_{0}(x_0,x_1,\ldots,x_n)=x_0^2 + x_1^2 + \ldots + x_n^2 - R^2 = 0
\end{equation}
with $R>0$ so that every preimage is now bounded. Morse-Sard now applies to conclude the set of regular values of $g=(g_0,\ldots,g_n): \RR^{n+1} \rightarrow \RR^{n+1}$ is open and dense. 
The number of non-degenerate solutions of the new system has twice the number of non-degenerate solutions of the original system that lie in the open ball of radius $R$ centered at the origin (because from \eqref{extraeq}, a solution $(x_1,x_2,\ldots,x_n)$ gives two possible values for $x_0$). In terms of bounding the corresponding $N',q'$, we now have
\begin{equation}
N' \leq 2 \cdot d_1 \cdots d_n \cdot (n + 1 + D)(5 + (n+2) + (2 +D + n+1 +D))^{k-1} \cdot 2^{\frac{(k-1)(k-2)}{2}} 
\end{equation}
(the extra 2 comes from the degree of \eqref{extraeq}, and we now have $n+2$ variables). For $q'$, the bound becomes
\begin{equation}
q' \leq 2 \cdot d_1 \cdots d_n \cdot 1 \cdot ( 5 + (n+2) + 2 + D+1)^{k-1} \cdot 2^{\frac{(k-1)(k-2)}{2}}
\end{equation}
(the extra 1 from the hyperplane equation).
Since the $R>0$ does not affect the bound computation, it can be taken large enough to include all the solutions to the original system. Thus $N'+q'$ is a bound for twice as many non-degenerate solutions of said original system. Finally, this way the induction step inequality can again be completed 
\[ 
\begin{array}{rcl}
\dfrac{N' + q'}{2} & \leq & d_1 \cdots d_n \left[ (1+n+D)(10 + 2n + 2D)^{k-1} + (10 + n + D)^{k-1} \right] \cdot 2^{\frac{(k-1)(k-2)}{2}} \\
& < &  d_1 \cdots d_n \left[ (1+n+D)(5 + n + D)^{k-1} \cdot  2^{k-1} + 4(5 + n + D)^{k-1} \cdot 2^{k-1} \right] \cdot 2^{\frac{(k-1)(k-2)}{2}} \\
& = & d_1 \cdots d_n (5+n+D)^k \cdot 2^{\frac{k(k-1)}{2}},
\end{array}
 \]
as needed. 
\end{proof}
Now we can obtain Theorem \ref{uppertheorem} as a corollary of the above.
\begin{proof}(of Theorem \ref{uppertheorem})
Let $\Phi(x)= \sum_{i=1}^k \alpha_i \Phi_i(x)$ be a Gaussian mixture
and consider the system $g_i(x_1,\ldots,x_d)$ given by the partial
derivatives $g_i= \frac{\partial \Phi}{\partial x_i}$. These can be
interpreted as polynomials $F_i(x_1,\ldots,x_d,y_1,\ldots,y_k)$ by
taking $y_i$ as the exponential kernel of $\Phi_i$. The system now
has the form as in Theorem \ref{upperthm}, and note that the degree of
each $F_i$ is $2$.  The number of non-degenerate critical points of
$\Phi$ is thus the number of non-degenerate solutions to the system of
$g_i$, and according to (\ref{fewbound}), it is bounded by
$$2 \cdots 2 \,( 5 + d + 2 + \ldots + 2)^k \cdot 2^{\frac{k(k-1)}{2}}
= 2^d (5+d+2d)^k \,  2^{\binom{k}{2}} =  2^{d+ \binom{k}{2}}
(5+3d)^k. $$
\end{proof}

Finally, we show as promised that \eqref{upperbound} is an upper bound on the number of modes of a Gaussian mixture, provided it is finite.

\begin{theorem} \label{thm:upper}
  If a mixture of $k$ Gaussians in $\RR^d$ has
  finitely many modes, then their number is bounded by
\begin{equation} 
2^{d+ \binom{k}{2}}  (5+3d)^k.
\end{equation}
\end{theorem}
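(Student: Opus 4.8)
The plan is to deduce this from the bound on non-degenerate critical points in Theorem~\ref{uppertheorem}, via an approximation argument carried out \emph{inside} the Gaussian-mixture family: I want to perturb the parameters of the given mixture slightly so that the perturbed mixture is a Morse function and still has at least as many modes, and then invoke Theorem~\ref{uppertheorem} for that Morse mixture, whose critical points are then all non-degenerate.

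The first step is to show that each mode is strictly dominant on a small sphere around it. Let $f_X$ have exactly the finitely many modes $p_1,\dots,p_m$. Fix $\rho_0>0$ so small that the closed balls $\overline{B}(p_i,\rho_0)$ are pairwise disjoint, each contains no mode other than $p_i$, and $f_X(p_i)=\max_{\overline{B}(p_i,\rho_0)}f_X$. I claim that for each $i$ there is $\rho_i\in(0,\rho_0)$ with $f_X(x)<f_X(p_i)$ whenever $\|x-p_i\|=\rho_i$. If not, then for every $\rho\in(0,\rho_0)$ the maximum of $f_X$ over the sphere $\{\|x-p_i\|=\rho\}$ equals $f_X(p_i)$, attained at some point $q(\rho)$; but $q(\rho)$ lies in the open ball $B(p_i,\rho_0)$, on which $f_X\le f_X(p_i)=f_X(q(\rho))$, so $q(\rho)$ is itself a mode of $f_X$. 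Since the points $q(\rho)$, $\rho\in(0,\rho_0)$, are pairwise distinct, $f_X$ would have uncountably many modes, contradicting the hypothesis. So the $\rho_i$ exist, and we may keep the $\overline{B}(p_i,\rho_i)$ pairwise disjoint.

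For the second step, write $f_X=f(\,\cdot\,;\theta)$, where $\theta$ is the tuple of weights, means and covariances, ranging over an open parameter domain $\Theta$. For $\theta'$ close to $\theta$ in $\Theta$, the mixture $f(\,\cdot\,;\theta')$ is uniformly close to $f_X$ on the compact set $\bigcup_i\overline{B}(p_i,\rho_i)$, so $\max_{\|x-p_i\|=\rho_i}f(x;\theta')<f(p_i;\theta')\le\max_{\overline{B}(p_i,\rho_i)}f(\,\cdot\,;\theta')$, hence $f(\,\cdot\,;\theta')$ attains a local maximum in the open ball $B(p_i,\rho_i)$; as these balls are disjoint, $f(\,\cdot\,;\theta')$ has at least $m$ modes. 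It remains to choose $\theta'$, arbitrarily close to $\theta$, for which $f(\,\cdot\,;\theta')$ is Morse. This is a parametric transversality (Thom--Sard) statement: it suffices that $0\in\RR^d$ is a regular value of the universal map $(x,\theta)\mapsto\nabla_x f(x;\theta)$, and at any zero of this map one can move the gradient in $\RR^d$ by varying the parameters — for instance
\[
\frac{\partial}{\partial\mu_i}\,\nabla_x f(x;\theta)=\alpha_i\,f_{X_i}(x)\,\Sigma_i^{-1}\bigl(I-(x-\mu_i)(x-\mu_i)^T\Sigma_i^{-1}\bigr),
\]
which is an invertible $d\times d$ block except on the ellipsoid $(x-\mu_i)^T\Sigma_i^{-1}(x-\mu_i)=1$; at points where one block drops rank, the analogous blocks for the other components (or a perturbation of the weights or covariances) supply the missing direction, these exceptional loci being of positive codimension. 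Hence the set of $\theta'\in\Theta$ for which $f(\,\cdot\,;\theta')$ is Morse is dense, and such a $\theta'$ exists arbitrarily close to $\theta$.

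For that $\theta'$, every critical point of $f(\,\cdot\,;\theta')$ is non-degenerate, so Theorem~\ref{uppertheorem} bounds their number — in particular the number of modes — by $2^{d+\binom{k}{2}}(5+3d)^k$; combined with the second step this yields $m\le 2^{d+\binom{k}{2}}(5+3d)^k$. I expect the main obstacle to be the Morse-genericity part of the second step: one must confirm that within the finite-dimensional Gaussian-mixture family a generic arbitrarily small parameter perturbation already removes every degenerate critical point, which needs the transversality computation above together with a careful treatment of the loci where a single $\mu_i$-block fails to be invertible. By contrast, the persistence of the $m$ modes under small perturbations (the first step and the first half of the second) is elementary once the finiteness of the mode set is used.
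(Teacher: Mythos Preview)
Your strategy---isolate each mode by a strict inequality on a small sphere, perturb to a Morse function, and invoke Theorem~\ref{uppertheorem}---is exactly the paper's. The difference is in \emph{which} perturbation you use, and this is where your argument is incomplete while the paper's is clean.

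You perturb inside the Gaussian-mixture family and then have to establish parametric transversality of $(x,\theta)\mapsto\nabla_x f(x;\theta)$ to $0$. Your justification (``the analogous blocks for the other components \ldots\ supply the missing direction, these exceptional loci being of positive codimension'') is not a proof: parametric transversality requires surjectivity of $D\Psi$ at \emph{every} zero of $\Psi$, not merely off a thin set, and you have not ruled out special $(x,\theta)$ where all $\mu_i$-blocks simultaneously miss the same direction (the simplex constraint $\sum\alpha_i=1$ also prevents you from using a single $\alpha_i$ independently to recover the missing direction as you implicitly suggest). You flag this yourself as ``the main obstacle,'' and indeed it is the unfinished part of your argument.

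The paper sidesteps this entirely by perturbing \emph{outside} the mixture family: it adds a linear function $\ell(x)=c\cdot x$. Then $\nabla(f+\ell)=\nabla f + c$, so varying $c$ moves the gradient freely in $\RR^d$ and Morse genericity is the classical statement that for almost every $c$ the function $f+c\cdot x$ is Morse (Milnor's Lemma~A). The crucial observation is that $f+\ell$ is no longer a Gaussian mixture, but its critical-point system is still of the form covered by Theorem~\ref{upperthm}: each equation $\partial_{x_i}f + c_i = 0$ is still a polynomial of degree $2$ in $x_1,\dots,x_d,y_1,\dots,y_k$, so the same bound $2^{d+\binom{k}{2}}(5+3d)^k$ applies. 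Your first step (strict domination on small spheres, using finiteness of the mode set) is correct and is precisely what the paper uses to guarantee mode persistence under a small enough linear perturbation.
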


\begin{proof}
Let $\Phi(x)= \sum_{i=1}^k \alpha_i \Phi_i(x)$ be the pdf of the Gaussian mixture. If all of its modes are non-degenerate, then Theorem \ref{upperthm} applies and we're done. The difficulty stems from considering possible degenerate modes. Note that by the finiteness hypothesis, all of them are isolated so we may fix disjoint neighborhoods $Q_i \subset \RR^d$ over which each mode is the unique global maximum.

For any linear function $\ell(x_1,\ldots,x_d)=c \cdot x$, the function $\Phi + \ell$ has a gradient that differs by the constant vector $c$ from $\nabla \Phi$. In particular, the system given by the partial derivatives of $\Phi+\ell$ are still polynomials $F_i(x_1,\ldots,x_d,y_1,\ldots,y_k)$ of degree 2. By Theorem \ref{uppertheorem} we have the bound \eqref{upperbound} on the non-degenerate modes of $\Phi+\ell$. Since $\Phi: \RR^d \rightarrow \RR$ is smooth, one of Morse's Lemmas \cite[Lemma A, p.11]{MILNOR} states that for almost all $c \in \mathbb{R}^d$ (all except for a set of measure zero), $\Phi + \ell$ has only non-degenerate critical points. 

Now, take $c \in \mathbb{R}^d$ in the complement of such measure zero set, with norm small enough so that $\Phi+\ell$ still has modes inside each of the neighborhoods $Q_i$. Then, $\Phi+\ell$ may have fewer critical points than $\Phi$ but we know $\Phi+\ell$ has at least as many modes as $\Phi$ does. Since the modes of $\Phi+\ell$ are bounded by \eqref{upperbound}, the result follows.
\end{proof}

\begin{corollary} \label{cor:upp}
If every mixture of $k$ Gaussians in $\RR^d$ has finitely many modes, then
 $$m(d,k) \leq 2^{d+ \binom{k}{2}}  (5+3d)^k.$$
\end{corollary}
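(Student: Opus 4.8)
The plan is to read the statement off directly from Theorem~\ref{thm:upper}. Fix $d,k \ge 1$ and assume the hypothesis: every mixture of $k$ Gaussians in $\RR^d$ has finitely many modes. Then for each such mixture with density $f_X$, Theorem~\ref{thm:upper} applies verbatim and tells us that the number of modes of $f_X$ is at most $2^{d+\binom{k}{2}}(5+3d)^k$.

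The one point that deserves to be stated explicitly is that this estimate is \emph{uniform}: the quantity $2^{d+\binom{k}{2}}(5+3d)^k$ depends only on $d$ and $k$ and not on the means $\mu_i$, the covariances $\Sigma_i$, or the weights $\alpha_i$ of the particular mixture. Consequently the supremum, over all $d$-dimensional mixtures of $k$ Gaussians, of the number of modes is again bounded by $2^{d+\binom{k}{2}}(5+3d)^k$. Since $m(d,k)$ is by definition this supremum, we obtain $m(d,k) \le 2^{d+\binom{k}{2}}(5+3d)^k$; note in passing that the finiteness hypothesis together with this uniform bound is exactly what guarantees that $m(d,k)$ is finite and that the supremum is actually attained, so that it is genuinely a maximum.

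I do not anticipate any real obstacle here: the substantive work has already been carried out, in Theorem~\ref{upperthm} (the Khovanskii-type fewnomial bound), in Theorem~\ref{uppertheorem} (its specialization to the gradient system of a Gaussian mixture, where every $F_i$ has degree $2$), and in Theorem~\ref{thm:upper} (the reduction from non-degenerate critical points to possibly degenerate modes via a small generic linear perturbation and Morse's lemma). The corollary is just the elementary observation that a bound which holds for every mixture is a bound for the maximum over all mixtures, so the only thing to do is to invoke Theorem~\ref{thm:upper} and pass to the supremum.
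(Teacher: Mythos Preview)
Your proposal is correct and matches the paper's approach: the corollary is stated immediately after Theorem~\ref{thm:upper} with no separate proof, precisely because it follows from that theorem by taking the supremum over all mixtures, exactly as you describe. Your remark that the bound is uniform in the mixture parameters is the only thing that needs noting, and it is indeed clear from the form of \eqref{upperbound}.
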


\section{Conclusion and Future Work}
\label{sec:six}
One of the motivations to study critical points of Gaussian mixtures
comes from the \textit{mean shift algorithm}. It converges if there
are only finitely many critical points.
This was the main goal sought in \cite{W} for Gaussian kernels.
As our bound \eqref{fewbound}
only bounds the number of non-degenerate critical points of Gaussian mixtures, a final
answer is still open. However, since non-degenerate critical points are isolated, we see that the set of
critical points of a Gaussian mixture is
finite if and only if it consists of isolated points, since this set is closed and bounded
(compare~\cite{GHA}).

Quantitatively, we do not expect our upper bound to be tight.
Rather, proving the lower bound $\binom{d+k-1}{d} $ for all $d,k$ will
be the main focus of a forthcoming paper, extending the technique used to prove Theorem \ref{thm:many}.

We observe that the construction strategies used in our lower bound can be extended to elliptical distributions, not only Gaussians, and this could be pursued further. For example, an extension of Ray and Lindsay's concept of \textit{ridgeline} for mixtures of elliptical distributions is done in \cite{AHR}, including a study of modes for mixtures of two $t$-distributions.

\section*{Acknowledgements}
The authors would like to thank Bernd Sturmfels for bringing the
problem to their attention and for his supportive comments. Thanks to
Peter Green for pointing to the discussion thread in the
{\tt ANZstat} mailing list.  Carlos Am\'endola was supported by the Einstein Foundation Berlin.  Alexander Engstr\"om would like to thank G\"unter Ziegler and Freie Universit\"at Berlin for their hospitality during his sabbatical. The authors also express their gratitude to anonymous referees for their helpful suggestions.

\end{document}